\newtheorem*{theorem*}{Theorem}
\newtheorem{theorem}{Theorem}
\newtheorem{corollary}{Corollary}
\newtheorem{definition}{Definition}
\newtheorem{lemma}{Lemma}
\begin{document}

\title{A topological semigroup structure on the space of actions modulo weak equivalence.}

\author{Peter Burton}

\date{\today}

\maketitle

\begin{abstract} We introduce a topology on the space of actions modulo weak equivalence finer than the one previously studied in the literature. We show that the product of actions is a continuous operation with respect to this topology, so that the space of actions modulo weak equivalence becomes a topological semigroup.  \end{abstract}

\section{Introduction.}

Let $\Gamma$ be a countable group and let $(X,\mu)$ be a standard probability space. All partitions considered in this note will be assumed to be measurable. If $a$ is a measure-preserving action of $\Gamma$ on $(X,\mu)$ and $\gamma \in \Gamma$ we write $\gamma^a$ for the element of $\mathrm{Aut}(X,\mu)$ corresponding to $\gamma$ under $a$. Let $\mathrm{A}(\Gamma,X,\mu)$ be the space of measure-preserving actions of $\Gamma$ on $(X,\mu)$. We have the following basic definition, due to Kechris.

\begin{definition} For actions $a,b \in \mathrm{A}(\Gamma,X,\mu)$ we say that $a$ is \textbf{weakly contained} in $b$ if for every partition $(A_i)_{i=1}^n$ of $(X,\mu)$, finite set $F \subseteq \Gamma$ and $\epsilon > 0$ there is a partition $(B_i)_{i=1}^n$ of $(X,\mu)$ such that \[ \left \vert \mu\left( \gamma^a A_i \cap A_j\right) - \mu\left(\gamma^b B_i \cap B_j\right)\right\vert < \epsilon \] for all $i,j \leq n$ and all $\gamma \in F$. We write $a \prec b$ to mean that $a$ is weakly contained in $b$. We say $a$ is \textbf{weakly equivalent} to $b$ and write $a \sim b$ if we have both $a \prec b$ and $b \prec a$. $\sim$ is an equivalence relation and we write $[a]$ for the weak equivalence class of $a$. \end{definition}

For more information on the space of actions and the relation of weak equivalence, we refer the reader to \cite{K}.  Let $\mathrm{A}_\sim(\Gamma,X,\mu) = \mathrm{A}(\Gamma,X,\mu) / \sim$ be the set of weak equivalence classes of actions. Freeness is invariant under weak equivalence, so the set $\mathrm{FR}_\sim(\Gamma,X,\mu)$ of weak equivalence classes of free actions is a subset of $\mathrm{A}_\sim(\Gamma,X,\mu)$.\\
\\
Given $[a],[b] \in \mathrm{A}_\sim(\Gamma,X,\mu)$ with representatives $a$ and $b$ consider the action $a \times b$ on $\left(X^2,\mu^2\right)$. We can choose an isomorphism of $\left(X^2,\mu^2 \right)$ with $(X,\mu)$ and thereby regard $a \times b$ as an action on $(X,\mu)$. The weak equivalence class of the resulting action on $(X,\mu)$ does not depend on our choice of isomorphism, nor on the choice of representatives. So we have a well-defined binary operation $\times$ on $\mathrm{A}_{\sim}(\Gamma,X,\mu)$. This is clearly associative and commutative. In Section \ref{sec2} we introduce a new topology on $\mathrm{A}_\sim(\Gamma,X,\mu)$ which is finer than the one studied in \cite{AbEl}, \cite{PBur1} and \cite{RTD}. We call this the fine topology. The goal of this note is to prove the following result.

\begin{theorem} \label{thm1} $\times$ is continuous with respect to the fine topology, so that in this topology $\left(\mathrm{A}_\sim(\Gamma,X,\mu), \times \right)$ is a commutative topological semigroup. \end{theorem}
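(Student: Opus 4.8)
The plan is to derive the continuity of $\times$ from two ingredients: an algebraic \emph{monotonicity} of the product with respect to weak containment, and an \emph{approximation} step expressing an arbitrary finite partition of $(X^2,\mu^2)$ by a product partition built from finite partitions of $(X,\mu)$.

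First I would record the monotonicity lemma (a standard fact, see \cite{K}): if $a \prec c$ and $b \prec d$ then $a \times b \prec c \times d$. To prove it, fix a partition $(E_l)_{l=1}^k$ of $(X^2,\mu^2)$, a finite set $F \subseteq \Gamma$ and $\epsilon > 0$. Since finite unions of measurable rectangles are dense in the measure algebra of $(X^2,\mu^2)$, one may choose partitions $(A_i)_{i=1}^p$ and $(B_j)_{j=1}^q$ of $(X,\mu)$ and a map $\phi \colon \{1,\dots,p\}\times\{1,\dots,q\} \to \{1,\dots,k\}$ such that the partition $(E'_l)_{l=1}^k$ given by $E'_l = \bigcup_{\phi(i,j)=l} A_i \times B_j$ satisfies $\mu^2(E_l \triangle E'_l) < \delta$ for all $l$, for a $\delta$ to be chosen small in terms of $\epsilon$, $k$ and $|F|$. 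The only computation needed is the bilinear identity
\[
\mu^2\!\left(\gamma^{a\times b} E'_l \cap E'_{l'}\right) \;=\; \sum_{\substack{\phi(i,j)=l\\ \phi(i',j')=l'}} \mu\!\left(\gamma^a A_i \cap A_{i'}\right)\,\mu\!\left(\gamma^b B_j \cap B_{j'}\right),
\]
valid for every $\gamma$. Applying $a\prec c$ to $(A_i)$ and $b\prec d$ to $(B_j)$ over $F$ with a tolerance depending on $\delta$, $p$, $q$ produces partitions $(A'_i)_{i=1}^p$ of $c$ and $(B'_j)_{j=1}^q$ of $d$ whose $F$-matrices are close to those of $(A_i)$ and $(B_j)$; inserting them into the same identity and using $\mu^2(E_l\triangle E'_l)<\delta$ shows that the product partition $(A'_i\times B'_j)$ merged by $\phi$ witnesses $a\times b\prec c\times d$ up to $\epsilon$ on $F$. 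This lemma reproves that $\times$ descends to $\mathrm{A}_\sim(\Gamma,X,\mu)$ and yields the monotonicity of $\times$ for the weak containment preorder, which is the easy half of continuity.

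Next I would unwind the definition of the fine topology from Section~\ref{sec2} and verify that for every basic open neighbourhood $\mathcal{U}$ of a product class $[a]\times[b]=[a\times b]$ there are fine-open neighbourhoods $\mathcal{V}$ of $[a]$ and $\mathcal{W}$ of $[b]$ such that $\times$ maps $\mathcal{V}\times\mathcal{W}$ into $\mathcal{U}$. Such a $\mathcal{U}$ is cut out by finitely much ``partition data'' for $a\times b$ together with a tolerance; by the approximation step this data may be taken grid-aligned, i.e.\ carried by a rectangle partition assembled from partitions $(A_i)_{i=1}^p$, $(B_j)_{j=1}^q$ of $(X,\mu)$ and a merging map $\phi$. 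The bilinear identity above exhibits the passage from the $F$-matrices of $(A_i)$ and $(B_j)$ to the $F$-matrix of the merged product partition as a jointly continuous (polynomial) map. So it suffices to pick $\mathcal{V}$, $\mathcal{W}$ fine enough that every class in $\mathcal{V}$ carries a $p$-piece partition with $F$-matrix near that of $(A_i)$, and every class in $\mathcal{W}$ a $q$-piece partition with $F$-matrix near that of $(B_j)$: by monotonicity and the identity, their product then carries the merged product partition with $F$-matrix inside the tolerance defining $\mathcal{U}$.

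The step I expect to be the main obstacle is the control of the number of pieces. Approximating a $k$-piece partition of $(X^2,\mu^2)$ by a rectangle partition forces side counts $p$ and $q$ that are in general far larger than $k$ and cannot be bounded in terms of $k$ alone; in the coarser topology of \cite{AbEl}, \cite{PBur1} and \cite{RTD} a neighbourhood of a class only constrains partitions with a bounded number of pieces, and this mismatch blocks the argument. The purpose of the fine topology is exactly to remove this defect --- its neighbourhoods are built to control partitions of all sizes with compatible tolerances --- so the crux of the proof is to check that, in the fine topology, the neighbourhoods $\mathcal{V}$ and $\mathcal{W}$ above can be chosen without reference to the large parameters $p$, $q$ coming from the approximation. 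Once this is done $\times$ is jointly continuous, and together with the associativity and commutativity already observed this makes $(\mathrm{A}_\sim(\Gamma,X,\mu),\times)$ a commutative topological semigroup, establishing Theorem~\ref{thm1}.
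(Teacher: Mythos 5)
Your strategy is essentially the paper's: approximate an arbitrary $k$-piece partition of $(X^2,\mu^2)$ by one assembled from rectangles $D^1_i\times D^2_j$, use the bilinear identity $\mu^2\left(\gamma^{a\times b}(D^1_i\times D^2_j)\cap(D^1_{i'}\times D^2_{j'})\right)=\mu\left(\gamma^aD^1_i\cap D^1_{i'}\right)\mu\left(\gamma^bD^2_j\cap D^2_{j'}\right)$ to reduce the $F$-matrix of the product on the rectangle partition to the $F$-matrices of the two factor partitions, and transfer those using the hypotheses on the factors. The paper does not isolate a qualitative monotonicity lemma; it runs the quantitative version of this computation directly against the definition of $d_f$, bounding $\sup_k d_H\left(C_{t,k}(a_n\times b_n),C_{t,k}(a\times b)\right)$ with the reverse half of the Hausdorff estimate obtained by symmetry. (One small inaccuracy in your setup: a basic fine neighbourhood of $[a\times b]$ is not cut out by finitely much partition data --- that describes the coarse topology; a $d_f$-ball constrains partitions of every size simultaneously, which is what your argument in fact needs.)

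The genuine gap sits exactly where you place the crux, and you defer it rather than resolve it. Two things are required. First, that a fine neighbourhood of $[a]$ of radius $\delta$ at level $t$ controls partitions of \emph{every} size $p$ with the \emph{same} tolerance $\delta$: this is immediate from the $\sup_k$ in the definition of $d_f$, so the large $p,q$ forced by the rectangle approximation cost nothing. Second --- and this is what your proposal does not supply --- the tolerance you must impose on the two factor matrices in order to get an $\epsilon$-bound on the product matrix must itself be independent of $p$ and $q$. Your own phrasing (``a tolerance depending on $\delta$, $p$, $q$'') concedes the dependence; that is harmless in the qualitative monotonicity lemma, where $p,q$ are fixed before weak containment is invoked, but fatal in the continuity argument, where $\mathcal{V}$ and $\mathcal{W}$ must be chosen before the partition of $X^2$ (hence before $p,q$) is known, and ``joint continuity of a polynomial map'' is a dimension-dependent statement that does not bridge this. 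The paper's Lemma \ref{lem1} is the missing ingredient: for $[0,1]$-valued families with $\sum_{i\in I}a_i=\sum_{j\in J}d_j=1$, $\sum_{i\in I}|a_i-b_i|<\delta$ and $\sum_{j\in J}|c_j-d_j|<\delta$ imply $\sum_{(i,j)\in I\times J}|a_ic_j-b_id_j|<2\delta$, a bound independent of $|I|$ and $|J|$. Applied with $I=p^2$, $J=q^2$ and the $\ell^1$ (summed) metric on matrices --- whose entries sum to $1$ because they come from partitions --- this makes the required tolerance on the factors a fixed $\epsilon/4$ regardless of $p,q$, which is precisely what lets the neighbourhoods be chosen in advance. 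Without some such dimension-free estimate the neighbourhood-chasing as you describe it does not close.
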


In \cite{RTD}, Tucker-Drob shows that for any free action $a$ we have $a \times s_\Gamma \sim a$, where $s_\Gamma$ is the Bernoulli shift on $\left( [0,1]^\Gamma,\lambda^\Gamma \right)$ with $\lambda$ being Lebesgue measure. Thus if we restrict attention to the free actions there is additional algebraic structure.

\begin{corollary} With the fine topology, $\left(\mathrm{FR}_\sim(\Gamma,X,\mu), \times \right)$ is a commutative topological monoid. \end{corollary}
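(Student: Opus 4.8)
The plan is to obtain the corollary from Theorem \ref{thm1} together with the result of Tucker-Drob quoted above, once we check that $\mathrm{FR}_\sim(\Gamma,X,\mu)$ is a sub-semigroup of $\mathrm{A}_\sim(\Gamma,X,\mu)$ containing a unit for $\times$.

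First I would check that $\times$ preserves freeness, so that it restricts to a binary operation on $\mathrm{FR}_\sim(\Gamma,X,\mu)$. If $a$ and $b$ are free and $\gamma \neq e$, then $\gamma^a x \neq x$ for $\mu$-almost every $x$, hence $\gamma^{a \times b}(x,y) = \left(\gamma^a x, \gamma^b y\right) \neq (x,y)$ for $\mu^2$-almost every $(x,y)$; thus $a \times b$ is free. Since freeness is invariant under weak equivalence, it follows that $[a] \times [b] \in \mathrm{FR}_\sim(\Gamma,X,\mu)$ whenever $[a],[b] \in \mathrm{FR}_\sim(\Gamma,X,\mu)$. Equip $\mathrm{FR}_\sim(\Gamma,X,\mu)$ with the subspace topology inherited from the fine topology. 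Then $\times$ on $\mathrm{FR}_\sim(\Gamma,X,\mu)$ is the restriction of the operation in Theorem \ref{thm1}, so it is continuous, and it inherits associativity and commutativity. Hence $\left(\mathrm{FR}_\sim(\Gamma,X,\mu), \times\right)$ is a commutative topological semigroup.

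It remains to produce an identity element. The Bernoulli shift $s_\Gamma$ on $\left([0,1]^\Gamma, \lambda^\Gamma\right)$ is free --- for each $\gamma \neq e$ the set of points fixed by $\gamma^{s_\Gamma}$ has $\lambda^\Gamma$-measure zero --- so, transporting $s_\Gamma$ to $(X,\mu)$ by an isomorphism exactly as in the definition of $\times$ in the introduction, its class lies in $\mathrm{FR}_\sim(\Gamma,X,\mu)$. By Tucker-Drob's theorem $a \times s_\Gamma \sim a$ for every free action $a$, so $[a] \times [s_\Gamma] = [s_\Gamma] \times [a] = [a]$ for all $[a] \in \mathrm{FR}_\sim(\Gamma,X,\mu)$; that is, $[s_\Gamma]$ is a two-sided identity. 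Together with the preceding paragraph this shows that $\left(\mathrm{FR}_\sim(\Gamma,X,\mu), \times\right)$ is a commutative topological monoid. Since the substantive content is already in Theorem \ref{thm1} and in \cite{RTD}, I do not anticipate any genuine obstacle; the only mild points to be careful about are the two freeness verifications above and the identification of $s_\Gamma$ with an action on $(X,\mu)$, which is routine.
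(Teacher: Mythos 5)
Your proposal is correct and follows exactly the route the paper intends: the corollary is presented as an immediate consequence of Theorem \ref{thm1} together with Tucker-Drob's result that $a \times s_\Gamma \sim a$ for free $a$, and your verifications (product of free actions is free, $s_\Gamma$ is free, freeness is a weak-equivalence invariant) are precisely the routine checks the paper leaves implicit.
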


\subsection*{Acknowledgements.}

We would like to thank Alexander Kechris for introducing us to this topic and posing the question of whether the product is continuous.

\section{Definition of the fine topology.} \label{sec2}

Fix an enumeration $\Gamma = (\gamma_s)_{s=1}^\infty$ of $\Gamma$. Given $a \in \mathrm{A}(\Gamma,X,\mu)$, $t,k \in \mathbb{N}$ and a partition $\mathcal{A} = (A_i)_{i=1}^k$ of $X$ into $k$ pieces let $M_{t,k}^\mathcal{A}(a)$ be the point in $[0,1]^{t \times k \times k}$ whose $s,l,m$ coordinate is $\mu\left(\gamma_s^a A_l \cap A_m \right)$. Endow $[0,1]^{t \times k \times k}$ with the metric given by the sum of the distances between coordinates and let $d_H$ be the corresponding Hausdorff metric on the space of compact subsets of $[0,1]^{t \times k \times k}$. Let $C_{t,k}(a)$ be the closure of the set \[\left \{ M^{\mathcal{A}}_{t,k}(a): \mathcal{A} \mbox{ is a partition of }X\mbox{ into }k\mbox{ pieces }\right \}.\] We have $a \sim b$ if and only if $C_{t,k}(a) = C_{t,k}(b)$ for all $t,k$. Define a metric $d_f$ on $\mathrm{A}_{\sim}(\Gamma,X,\mu)$ by \[ d_f\left([a],[b]\right) = \sum_{t=1}^\infty \frac{1}{2^t} \left( \sup_{k} d_H\left(C_{t,k}(a),C_{t,k}(b) \right) \right).\] This is clearly finer than the topology on $\mathrm{A}_\sim(\Gamma,X,\mu)$ discussed in the references. 

\begin{definition} The topology induced by $d_f$ is called the the \textbf{fine topology}. \end{definition}

We have $[a_n] \to [a]$ in the fine topology if and only if for every finite set $F \subseteq \Gamma$ and $\epsilon > 0$ there is $N$ so that when $n \geq N$, for every $k \in \mathbb{N}$ and every partition $(A_l)_{l=1}^k$ of $(X,\mu)$ there is a partition $(B_l)_{l=1}^k$ so that \[ \sum_{l,m=1}^k\left \vert \mu \left( \gamma^{a_n} A_l \cap A_m \right) - \mu \left( \gamma^a B_l \cap B_m \right) \right \vert < \epsilon \] for all $\gamma \in F$ and $l,m \leq k$. 

\section{Proof of the theorem.}

We begin by showing a simple arithmetic lemma.

\begin{lemma}\label{lem1} Suppose $I$ and $J$ are finite sets and $(a_i)_{i \in I}, (b_i)_{i \in I}, (c_j)_{j \in J}, (d_j)_{j \in J}$ are sequences of elements of $[0,1]$ with $\sum_{i \in I} a_i = 1$, $\sum_{j \in J} d_j = 1$, $\sum_{i \in I} |a_i - b_i| < \delta$ and $\sum_{j \in J} |c_j - d_j| < \delta$. Then $\sum_{(i,j) \in I \times J} |a_i c_j - b_i d_j| < 2 \delta$. \end{lemma}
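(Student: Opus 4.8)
The plan is to bound $|a_i c_j - b_i d_j|$ for each pair $(i,j)$ by inserting an intermediate term, summing over all pairs, and using the two hypotheses together with the normalization conditions. Concretely, I would write
\[
|a_i c_j - b_i d_j| \leq |a_i c_j - b_i c_j| + |b_i c_j - b_i d_j| = |a_i - b_i|\, c_j + b_i\, |c_j - d_j|.
\]
Now I sum this over all $(i,j) \in I \times J$. For the first term, $\sum_{i,j} |a_i - b_i| c_j = \left( \sum_i |a_i - b_i| \right)\left( \sum_j c_j \right)$, and since $\sum_j c_j \leq \sum_j d_j + \sum_j |c_j - d_j| < 1 + \delta$, this is at most $\delta(1+\delta)$. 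Hmm, that gives $2\delta + \delta^2$ rather than $2\delta$, so I need to be slightly more careful with which bounds I use.

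To get the clean constant $2\delta$, I would instead split using the term $a_i d_j$ as the intermediate quantity, taking advantage of the two exact normalizations $\sum_i a_i = 1$ and $\sum_j d_j = 1$:
\[
|a_i c_j - b_i d_j| \leq |a_i c_j - a_i d_j| + |a_i d_j - b_i d_j| = a_i\, |c_j - d_j| + d_j\, |a_i - b_i|.
\]
Summing over $(i,j) \in I \times J$ and factoring,
\[
\sum_{(i,j) \in I \times J} |a_i c_j - b_i d_j| \leq \left( \sum_{i \in I} a_i \right)\left( \sum_{j \in J} |c_j - d_j| \right) + \left( \sum_{j \in J} d_j \right)\left( \sum_{i \in I} |a_i - b_i| \right) < 1 \cdot \delta + 1 \cdot \delta = 2\delta,
\]
which is exactly the claimed bound.

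There is no real obstacle here; the only subtlety is the bookkeeping choice of which intermediate term to insert. Using $a_i d_j$ (so that the two coefficients that get summed are precisely the ones whose sums equal $1$ by hypothesis) is what makes the constant come out to exactly $2\delta$ with no error term. I would present the computation in the second form above. This lemma will presumably be applied with $(a_i)$, $(c_j)$ the distributions of partition-intersection data for one pair of actions and $(b_i)$, $(d_j)$ the corresponding data for a nearby pair, to control how the fine metric behaves under the product operation $\times$.
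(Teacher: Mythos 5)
Your proof is correct and uses exactly the same decomposition as the paper: inserting the intermediate term $a_i d_j$, applying the triangle inequality, and exploiting the normalizations $\sum_i a_i = 1$ and $\sum_j d_j = 1$. The only cosmetic difference is that the paper sums over $j$ for fixed $i$ first and then over $i$, while you sum over both indices at once and factor.
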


\begin{proof} Fix $i$. We have \begin{align*} \sum_{j \in J} |a_i c_j - b_i d_j| & \leq \sum_{j \in J}( |a_i c_j - a_i d_j| + |d_j a_i + d_j b_i|) \\ & = \sum_{j \in J} (a_i |c_j - d_j| + d_j |a_i - b_i| ) \\ & \leq \delta a_i + |a_i - b_i|. \end{align*} Therefore \[ \sum_{(i,j) \in I \times J} |a_i c_j - b_i d_j| \leq \sum_{i \in I} (a_i \delta + |a_i - b_i|) \leq 2 \delta.\]\end{proof}

We now give the main argument.

\begin{proof}[Proof of Theorem \ref{thm1}] Suppose $[a_n] \to [a]$ and $[b_n] \to [b]$ in the fine topology. Fix $\epsilon > 0$ and $ t \in \mathbb{N}$. Let $N$ be large enough so that when $n \geq N$ we have \begin{equation} \max \left( \sup_k d_H\left(C_{t,k}\left(a_n \right),C_{t,k}(a) \right), \hspace{1pt} \sup_k d_H \left( C_{t,k} \left(b_n \right),C_{t,k}(b) \right) \right) < \frac{\epsilon}{4}. \end{equation}

Fix $n \geq N$. Let $k \in \mathbb{N}$ be arbitrary and consider a partition $\mathcal{A} = (A_l)_{l=1}^k$ of $X^2$ into $k$ pieces. Find partitions $\left(D^1_i\right)_{i=1}^p$ and  $\left(D^2_i\right)_{i=1}^q$ of $X$ such that for each $l \leq k$ there are pairwise disjoint sets $I_l \subseteq p \times q$ such that if we write $D_l = \bigcup_{(i,j) \in I_l} D^1_i \times D^2_j $ then \begin{equation} \mu^2 \left( D_l \triangle A_l \right) < \frac{\epsilon}{4k^2}. \end{equation} Write $(\gamma_s)_{s=1}^t = F$. By $(1)$ we can find a partition $\left(E^1_i\right)_{i=1}^p$ of $X$ such that for all $\gamma \in F$ we have \begin{equation} \sum_{i,j=1}^p \left \vert \mu \left( \gamma^{a} D^1_i \cap D^1_j \right) - \mu \left( \gamma^{a_n} E^1_i \cap E^1_j \right ) \right \vert < \frac{\epsilon}{4} \end{equation} and a partition $\left(E^2_i\right)_{i=1}^q$ of $X$ such that for all $\gamma \in F$ we have \begin{equation} \sum_{i,j=1}^q \left \vert \mu \left( \gamma^b D^2_i \cap D^2_j \right) - \mu \left( \gamma^{b_n} E^2_i \cap E^2_j \right ) \right \vert < \frac{\epsilon}{4}. \end{equation} Define a partition $\mathcal{B} = (B_l)_{l=1}^k$ of $X^2$ by setting $B_l = \bigcup_{(i,j) \in I_l}  E^1_i \times E^2_j$. For $\gamma \in F$ we now have

\begin{align*} &\sum_{l,m = 1}^k \left \vert \mu^2( \gamma^{a \times b} D_l \cap D_m) - \mu^2 ( \gamma^{a_n \times b_n} B_l \cap B_m) \right \vert \\ & = \sum_{l,m = 1}^k \Bigg \vert \mu^2 \left( \gamma^{a \times b} \left(\bigcup_{(i_1,j_1) \in I_l} D^1_{i_1} \times D^2_{j_1} \right) \cap \left(\bigcup_{(i_2,j_2) \in I_m} D^1_{i_2} \times D^2_{j_2} \right) \right) \\ & \hspace{1 in}- \mu^2 \left( \gamma^{a_n \times b_n}  \left(\bigcup_{(i_1,j_1) \in I_l} E^1_{i_1} \times E^2_{j_1} \right) \cap \left(\bigcup_{(i_2,j_2) \in I_m} E^1_{i_2} \times E^2_{j_2} \right) \right) \Bigg \vert \end{align*} \begin{align} & = \sum_{l,m = 1}^k \Bigg \vert \mu^2 \left( \left(\bigcup_{(i_1,j_1) \in I_l} \gamma^a D^1_{i_1} \times \gamma^b D^2_{j_1} \right) \cap \left(\bigcup_{(i_2,j_2) \in I_m} D^1_{i_2} \times D^2_{j_2} \right) \right) \nonumber \\ &\hspace{1 in}- \mu^2 \left( \left(\bigcup_{(i_1,j_1) \in I_l} \gamma^{a_n} E^1_{i_1} \times \gamma^{b_n} E^2_{j_1} \right) \cap \left(\bigcup_{(i_2,j_2) \in I_m} E^1_{i_2} \times E^2_{j_2} \right) \right) \Bigg \vert \nonumber \\ & = \sum_{l,m = 1}^k \Bigg \vert \mu^2 \left(\bigcup_{\substack{(i_1,j_1,i_2,j_2) \\ \in I_l \times I_m}} \left( \gamma^a D^1_{i_1} \times \gamma^b D^2_{j_1} \right) \cap \left( D^1_{i_2} \times D^2_{j_2} \right) \right) \nonumber\\ & \hspace{1 in}- \mu^2 \left(\bigcup_{\substack{(i_1,j_1,i_2,j_2) \\ \in I_l \times I_m}}  \left( \gamma^{a_n} E^1_{i_1} \times E^2_{j_1} \right) \cap \left( \gamma^{b_n} E^1_{i_2} \times E^2_{j_2} \right) \right) \Bigg \vert \nonumber\\ & = \sum_{l,m = 1}^k \Bigg \vert \mu^2 \left(\bigcup_{\substack{(i_1,j_1,i_2,j_2) \\ \in I_l \times I_m}} \left( \gamma^a D^1_{i_1} \cap D^1_{i_2} \right) \times \left( \gamma^b D^2_{j_1} \cap D^2_{j_2} \right) \right) \nonumber \\ & \hspace{1 in}- \mu^2 \left(\bigcup_{\substack{(i_1,j_1,i_2,j_2) \\ \in I_l \times I_m}} \left( \gamma^{a_n} E^1_{i_1} \cap E^1_{i_2} \right) \times \left( \gamma^{b_n} E^2_{j_1} \cap E^2_{j_2} \right) \right) \Bigg \vert \nonumber \\ &\leq \sum_{l,m = 1}^k \sum_{\substack{(i_1,j_1,i_2,j_2) \\ \in I_l \times I_m}} \left \vert \mu\left(\gamma^{a} D^1_{i_1} \cap D^1_{i_2}\right) \mu\left( \gamma^{b} D^2_{j_1} \cap D^2_{j_2} \right) - \mu\left(\gamma^{a_n} E^1_{i_1} \cap E^1_{i_2}\right) \mu\left( \gamma^{b_n} E^2_{j_1} \cap E^2_{j_2} \right) \right \vert \nonumber \\ & \leq \sum_{\substack{(i_1,j_1,i_2,j_2) \\ \in p \times q \times p \times q}} \left \vert \mu\left(\gamma^{a} D^1_{i_1} \cap D^1_{i_2}\right) \mu\left( \gamma^{b} D^2_{j_1} \cap D^2_{j_2} \right) - \mu\left(\gamma^{a_n} E^1_{i_1} \cap E^1_{i_2}\right) \mu\left( \gamma^{b_n} E^2_{j_1} \cap E^2_{j_2} \right) \right \vert \nonumber \\ & = \sum_{\substack{(i_1,i_2,j_1,j_2) \\ \in p^2 \times q^2}} \left \vert \mu\left(\gamma^{a} D^1_{i_1} \cap D^1_{i_2}\right) \mu\left( \gamma^{b} D^2_{j_1} \cap D^2_{j_2} \right) - \mu\left(\gamma^{a_n} E^1_{i_1} \cap E^1_{i_2}\right) \mu\left( \gamma^{b_n} E^2_{j_1} \cap E^2_{j_2} \right) \right \vert. \end{align} Now $(3)$ and $(4)$ let us apply Lemma \ref{lem1} with $I = p^2, J = q^2$ and $\delta = \frac{\epsilon}{4}$ to conclude that $(5) \leq \frac{\epsilon}{2}$. Note that for any three subsets $S_1,S_2,S_3$ of a probability space $(Y,\nu)$ we have \begin{align*} |\nu(S_1 \cap S_3) - \nu(S_2 \cap S_3)| &= |\nu(S_1 \cap S_2 \cap S_3) + \nu((S_1 \setminus S_2) \cap S_3) - \nu(S_1 \cap S_2 \cap S_3) - \nu((S_2 \setminus S_1) \cap S_3)|  \\& \leq \nu(S_1 \triangle S_2),\end{align*} hence for any $l,m \leq k$ and any action $c \in \mathrm{A}\left(\Gamma,X^2,\mu^2 \right)$ we have \begin{align*} & \left \vert \mu^2(\gamma^c A_l \cap A_m) - \mu^2(\gamma^c D_l \cap D_m)\right \vert \\ & \leq \left \vert \mu^2(\gamma^c A_l \cap A_m) - \mu^2(\gamma^c D_l \cap A_m) \right \vert + \left \vert \mu^2\left(\gamma^c D_l \cap A_m\right) - \mu^2\left(\gamma^c D_l \cap D_m\right) \right \vert \\ & \leq \mu^2\left(\gamma^c A_l \triangle \gamma^c D_l \right) + \mu^2 (A_m \triangle D_m) \leq \frac{\epsilon}{2 k^2 }, \end{align*} where the last inequality follows from $(2)$. Hence for all $\gamma \in F$, \begin{align*} & \sum_{l,m = 1}^k \left \vert \mu^2(\gamma^{a \times b} A_l \cap A_m) - \mu^2(\gamma^{a_n \times b_n} B_l \cap B_m) \right \vert \\ & \leq \sum_{l,m = 1}^k \left(\left \vert \mu^2(\gamma^a A_l \cap A_m) - \mu^2(\gamma^a D_l \cap D_m)\right \vert + \left \vert \mu^2(\gamma^{a \times b} D_l \cap D_m) - \mu^2(\gamma^{a_n \times b_n} B_l \cap B_m) \right \vert \right) \\ & \leq \sum_{l,m = 1}^k \left( \frac{\epsilon}{2 k^2 } + \left \vert \mu^2(\gamma^{a \times b} D_l \cap D_m) - \mu^2(\gamma^{a_n \times b_n} B_l \cap B_m) \right \vert \right) \\ & \leq \frac{\epsilon}{2} + (5) \leq \epsilon. \end{align*} 

Therefore $M_{t,k}^\mathcal{A}(a \times b)$ is within $\epsilon$ of $M_{t,k}^\mathcal{B}(a_n \times b_n)$ and we have shown that for all $k$, $C_{t,k}(a \times b)$ is contained in the ball of radius $\epsilon$ around $C_{t,k}(a_n \times b_n)$. A symmetric argument shows that if $n \geq N$ then for all $k$, $C_{t,k}(a_n \times b_n)$ is contained in the ball of radius $\epsilon$ around $C_{t,k}(a \times b)$ and thus the theorem is proved.\end{proof}

\bibliographystyle{plain}
\bibliography{bibliography}

Department of Mathematics\\
California Institute of Technology\\
Pasadena, CA 91125\\
\texttt{pjburton@caltech.edu}

\end{document}